\newcommand{\arxiv}[1]{\href{http://arxiv.org/abs/#1}{\texttt{arXiv:#1}}}
\numberwithin{equation}{section}
\newtheorem{theorem}{Theorem}[section]
\newtheorem{proposition}[theorem]{Proposition}
\newtheorem{corollary}[theorem]{Corollary}
\newtheorem{lemma}[theorem]{Lemma}
\theoremstyle{definition}
\newtheorem{defn}[theorem]{Definition}
\newcommand{\maj}{{\mathrm {maj}}}
\newcommand{\Des}{{\mathrm {Des}}}
\newcommand{\grFrob}{{\mathrm {grFrob}}}
\newcommand{\des}{{\mathrm {des}}}
\newcommand{\SYT}{{\mathrm {SYT}}}
\newcommand{\Frob}{{\mathrm {Frob}}}
\newcommand{\symm}{{\mathfrak{S}}}
\newcommand{\CC}{{\mathbb {C}}}
\newcommand{\QQ}{{\mathbb {Q}}}
\newcommand{\OP}{{\mathcal{OP}}}
\newcommand{\AAA}{{\mathcal{A}}}
\newcommand{\GS}{{\mathcal{GS}}}
\newcommand{\xx}{{\mathbf {x}}}
\newcommand{\II}{{\mathbf {I}}}
\newcommand{\yy}{{\mathbf {y}}}
\newcommand{\TT}{{\mathbf {T}}}
\newcommand{\pib}{{ \overline{\pi}}}
\definecolor{darkred}{rgb}{0.7,0,0} 
\newcommand{\defterm}[1]{{\color{darkred}\emph{#1}}} 
\begin{document}

\title[Hall-Littlewood polynomials and a Hecke action on ordered set partitions]
{Hall-Littlewood polynomials and a Hecke action on ordered set partitions}

\author{Jia Huang}
\address
{Department of Mathematics \newline \indent
University of Nebraska at Kearney \newline \indent
Kearny, NE, 68849, USA}
\email{huangj2@unk.edu}

\author{Brendon Rhoades}
\address
{Department of Mathematics \newline \indent
University of California, San Diego \newline \indent
La Jolla, CA, 92093, USA}
\email{bprhoades@math.ucsd.edu}

\author{Travis Scrimshaw}
\address
{School of Mathematics and Physics \newline \indent
University of Queensland \newline \indent
St. Lucia, QLD 4702, Australia}
\email{tcscrims@gmail.com}

\begin{abstract}
We construct an action of the Hecke algebra $H_n(q)$ on a quotient of the 
polynomial ring $F[x_1, \dots, x_n]$, where $F = \QQ(q)$.  The dimension of our quotient ring is the number of 
$k$-block ordered set partitions of $\{1, 2, \dots, n\}$.  This gives a quantum analog of a construction of 
Haglund--Rhoades--Shimozono and interpolates between their result at $q = 1$ and work of Huang--Rhoades at $q = 0$.
\end{abstract}

\keywords{ordered set partition, coinvariant algebra, symmetric function, hecke algebra}
\maketitle

\section{Introduction}
\label{Introduction}

In this paper we construct a quantum deformation of a recently introduced module~\cite{HRS} 
over the symmetric group $\symm_n$ 
with connections to the  Delta Conjecture~\cite{HRW} in the theory of Macdonald polynomials.
We define and study a graded module $R_{n,k}^{(q)}$ over the Hecke algebra $H_n(q)$ that interpolates 
between a construction of Haglund--Rhoades--Shimozono~\cite{HRS} at $q = 1$  and
a construction of Huang--Rhoades ~\cite{HR} at $q = 0$.
We describe the graded isomorphism type of $R_{n,k}^{(q)}$ and realize $R_{n,k}^{(q)}$ as a 
quantum-deformed instance of the point-orbit method of Garsia-Procesi~\cite{GarsiaProcesi} for constructing 
$\symm_n$-actions on quotients of polynomial rings.

Let $q$ be a formal parameter; we work over the field $F = \QQ(q)$.  The \defterm{(Iwahori-)Hecke algebra}
$H_n(q)$ is a deformation of the group algebra of the symmetric group $\QQ[\symm_n]$.
It is defined as the $F$-algebra generated by $T_1, T_2, \dots, T_{n-1}$ subject to the  relations
\begin{equation}
\begin{cases}
(T_i + 1)(T_i - q) = 0 & \text{for } 1 \leq i \leq n-1, \\
T_i T_j = T_j T_i & \text{for } |i - j| > i, \\
T_i T_{i+1} T_i = T_{i+1} T_i T_{i+1} & \text{for }1 \leq i \leq n-2.
\end{cases}
\end{equation}
When $q = 1$ this is the Coxeter presentation of the symmetric group algebra $\QQ[\symm_n]$.
The algebra $H_n(q)$ has $F$-dimension $n!$ and a linear basis $\{T_w:w\in\symm_n\}$, where $T_w:=T_{s_1}\cdots T_{s_\ell}$ if $w=s_1\cdots s_\ell$ is a reduced expression. 
For $q$ generic (not zero or a root of unity), as it will be in this paper, the $F$-algebra $H_n(q)$ is semisimple and has irreducible representations indexed by partitions $\lambda \vdash n$.
Hecke algebras naturally arise and play significant roles in many places, such as automorphic forms, combinatorics, quantum groups, and the representation theory of symmetric groups and general linear groups~\cite{BumpHecke}.
Finding Hecke deformations of actions of the symmetric group is a pervasive theme in algebraic 
combinatorics (see, {\it e.g.},~\cite{HeckeCoinv,GelfandModels}).

As an ungraded module, our Hecke deformation can be described using ordered set partitions.
A \defterm{$k$-block ordered set partition of size $n$} is a sequence $(B_1 \mid \cdots \mid B_k)$ of $k$ nonempty 
subsets of $[n] := \{1, \dots, n\}$ such that we have the disjoint union $B_1 \sqcup \cdots \sqcup B_k = [n]$.
Let $\OP_{n,k}$ be the family of $k$-block ordered set partitions of size $n$.
For example, $(2 5 \mid 1 \mid 3 4) \in \OP_{5,3}$.  
We may identify $\OP_{n,n}$ with $\symm_n$.

Let $F[\OP_{n,k}]$ be the $F$-vector space with basis $\OP_{n,k}$.
The algebra $H_n(q)$ acts on $F[\OP_{n,k}]$ by the rule
\begin{equation}
T_i.\sigma = \begin{cases}
q s_i(\sigma) + (q-1) \sigma   & \text{if $i+1$ appears in a block to the left of the block containing $i$ in $\sigma$,} \\
s_i(\sigma)  & \text{if $i+1$ appears in a block to the right of the block containing $i$ in $\sigma$,} \\
q \sigma & \text{if $i+1$ appears in the same block as $i$ in $\sigma$,}
\end{cases}
\end{equation}
for $\sigma \in \OP_{n,k}$.
Here $s_i(\sigma)$ is the ordered set partition obtained by interchanging $i$ and $i+1$ in $\sigma$. 
This interpolates between the natural action of $\symm_n$ on $\QQ[\OP_{n,k}]$ at $q = 1$ and 
a `bubble sorting' action of the \defterm{$0$-Hecke algebra} $H_n(0)$ on $\QQ[\OP_{n,k}]$ at $q = 0$ (see~\cite{HR}).
For example, we have
\begin{align*}
T_1.(25 \mid 1 \mid 34) & = q (15 \mid 2 \mid 34) + (q-1) (25 \mid 1 \mid 34), \\
T_2.(25 \mid 1 \mid 34) & = (35 \mid 1 \mid 24), \\
T_3.(25 \mid 1 \mid 34) & = q (25 \mid 1 \mid 34).
\end{align*}
We also construct a graded refinement of this action of $H_n(q)$.

\section{Results}
\label{Results}

We recall the standard action of $H_n(q)$ on the polynomial ring $F[\xx_n] := F[x_1, \dots, x_n]$.
For $1 \leq i \leq n-1$, the adjacent transposition $s_i$ acts on polynomials 
by swapping $x_i$ and $x_{i+1}$:
\begin{equation}
s_i. f(x_1, \dots, x_i, x_{i+1}, \dots, x_n) = f(x_1, \dots, x_{i+1}, x_i, \dots, x_n).
\end{equation}
The \defterm{divided difference operator} $\partial_i$ acts on $F[\xx_n]$ by the rule
\begin{equation}
\partial_i.f(\xx_n) := \frac{f(\xx_n) - s_i.f(\xx_n)}{x_i - x_{i+1}}.
\end{equation}
The \defterm{isobaric divided difference operator} $\pi_i$ is the operator on $F[\xx_n]$ given by composing
multiplication by $x_i$ with $\partial_i$:
\begin{equation}
\pi_i.f(\xx_n) := \partial_i . [x_i f(\xx_n)].
\end{equation}
We will need a modified version of the $\pi_i$ given by $\pib_i := \pi_i - 1$.
If $m$ is a monomial not containing $x_i$ and $x_{i+1}$, then 
\begin{equation}\label{pib}
\overline\pi_i(mx_i^ax_{i+1}^b)=\left\{\begin{array}{ll}
m(x_i^{a-1}x_{i+1}^{b+1}+x_i^{a-2}x_{i+1}^{b+2}\cdots +x_i^bx_{i+1}^a), & {\rm if}\ a>b,\\
0, &{\rm if}\ a=b,\\
-m(x_i^ax_{i+1}^b + x_i^{a+1}x_{i+1}^{b-1} + \cdots + x_i^{b-1}x_{i+1}^{a+1}), & {\rm if}\ a<b.
\end{array}\right.
\end{equation}

Finally, the generator $T_i$ of $H_n(q)$ acts on $F[\xx_n]$ by
\begin{equation}
T_i. f(\xx_n) := q s_i.f(\xx_n) + (1-q) \pib_i.f(\xx_n).
\end{equation}
A direct computation shows that the $T_i$ satisfy the relations of $H_n(q)$.

Let $\Lambda$ be the algebra of symmetric functions in the variable set $\xx = (x_1, x_2, \dots )$.
For any partition $\lambda \vdash n$, let 
\begin{equation*}
m_{\lambda}(\xx), \quad e_{\lambda}(\xx), \quad h_{\lambda}(\xx), \quad s_{\lambda}(\xx), \quad P_{\lambda}(\xx;q)
\end{equation*}
be the associated \defterm{monomial}, \defterm{elementary}, \defterm{(complete) homogeneous}, \defterm{Schur}, and \defterm{Hall-Littlewood $P$-function};
we refer the reader to~\cite{Macdonald} for their definitions.
We will use $m_{\lambda}(\xx_n), e_{\lambda}(\xx_n),$ etc. to denote the restriction of these symmetric functions 
to the variables $\xx_n = (x_1, \dots, x_n)$.
The following quotient ring is the main object of study in this paper.

\begin{defn}
\label{main-definition}
Let $k \leq n$ be positive integers.  Let $I_{n,k}^{(q)} \subseteq F[\xx_n]$ be the ideal
\begin{equation*}
I_{n,k}^{(q)} := \langle P_k(x_1;q), P_k(x_1, x_2; q), \dots, P_k(x_1, x_2, \dots, x_n; q), e_n(\xx_n), e_{n-1}(\xx_n),
\dots, e_{n-k+1}(\xx_n) \rangle
\end{equation*}
and let 
\begin{equation*}
R_{n,k}^{(q)} := F[\xx_n]/I_{n,k}^{(q)}
\end{equation*}
be the corresponding quotient.
\end{defn}

When $q = 1$ we have $P_k(x_1, \dots, x_i; 1) = x_1^k + \cdots + x_i^k$, so that 
\begin{align*}
I_{n,k}^{(1)} &= 
\langle x_1^k, x_1^k + x_2^k, \dots, x_1^k + x_2^k + \cdots + x_n^k, e_n(\xx_n), e_{n-1}(\xx_n),
\dots, e_{n-k+1}(\xx_n) \rangle \\
&=
\langle x_1^k,x_2^k, \dots, x_n^k, e_n(\xx_n), e_{n-1}(\xx_n),
\dots, e_{n-k+1}(\xx_n) \rangle 
\end{align*}
reduces to the ideal $I_{n,k} \subseteq \QQ[\xx_n]$ constructed by Haglund--Rhoades--Shimozono~\cite{HRS}.
When $q = 0$, we have $P_k(x_1, \dots, x_i; 0) = h_k(x_1, \dots, x_i)$, so that
\begin{equation*}
I_{n,k}^{(0)} =
 \langle h_k(x_1), h_k(x_1, x_2), \dots, h_k(x_1, x_2, \dots, x_n), e_n(\xx_n), e_{n-1}(\xx_n), \dots, e_{n-k+1}(\xx_n) \rangle
\end{equation*}
is the ideal $J_{n,k}$ studied by Huang--Rhoades in the context of the 0-Hecke algebra $H_n(0)$~\cite{HR}.

The ideal $I_{n,k}^{(q)}$ is homogeneous, so $R_{n,k}^{(q)}$ has the structure of a graded $F$-vector space.
When $k = n$, the ideal $I_{n,n}^{(q)}$ is the classical \defterm{invariant ideal} generated by the space $F[\xx_n]^{\symm_n}_+$
of $\symm_n$-invariant polynomials with vanishing constant term, 
and $R_{n,n}^{(q)} = F[\xx_n]/\langle F[\xx_n]^{\symm_n}_+ \rangle$ is the classical \defterm{coinvariant algebra} $R_n$ of the symmetric group $\symm_n$.  
There is a well-known action of the Hecke algebra $H_n(q)$ on $R_n$~\cite{HeckeCoinv}; we prove that the same is true for $R_{n,k}^{(q)}$.

\begin{proposition}
\label{r-is-stable}
Let $k \leq n$ be positive integers.  The ideal $I_{n,k}^{(q)}$ is stable under the action of $H_n(q)$ on $F[\xx_n]$.
The quotient ring $R_{n,k}^{(q)}$ is therefore a graded $H_n(q)$-module.
\end{proposition}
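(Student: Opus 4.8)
The plan is to reduce the $H_n(q)$-stability of $I_{n,k}^{(q)}$ to a finite check on the listed generators and then to run through those generators; the only nonroutine point will be a branching identity for Hall--Littlewood polynomials. First I would record a product rule for the operators $T_i$. From $\pi_i.f = \frac{x_i f - x_{i+1}(s_i.f)}{x_i - x_{i+1}}$ and $\partial_i.f = \frac{f - s_i.f}{x_i - x_{i+1}}$ one gets $\pib_i.f = x_{i+1}(\partial_i.f)$, and combining this with the Leibniz rule $\partial_i.(fg) = (\partial_i.f)g + (s_i.f)(\partial_i.g)$ gives, for all $f, g \in F[\xx_n]$,
\[
T_i.(fg) \;=\; (s_i.f)(T_i.g) \;+\; (1-q)\,x_{i+1}\,(\partial_i.f)\,g .
\]
Since every element of $I_{n,k}^{(q)}$ is a sum of products $h \cdot g$ with $h \in F[\xx_n]$ and $g$ one of the listed generators, and $T_i$ is $F$-linear, this identity shows that $T_i.\big(I_{n,k}^{(q)}\big) \subseteq I_{n,k}^{(q)}$ as soon as $T_i.g \in I_{n,k}^{(q)}$ for every generator $g$: in $T_i.(hg)$ the first term lies in the ideal once $T_i.g$ does, and the second term is a multiple of $g$. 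As $H_n(q)$ is generated by $T_1, \dots, T_{n-1}$, proving this for all $i$ proves the proposition.

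If $f$ is symmetric in $x_i$ and $x_{i+1}$, or simply involves neither variable, then $\partial_i.f = 0$, hence $\pib_i.f = 0$ and $T_i.f = q\,(s_i.f) = qf$. This covers the generators $e_\ell(\xx_n)$ for all $i$, and $P_k(x_1, \dots, x_m; q)$ whenever $i \le m-1$ (symmetric in $x_i, x_{i+1}$) or $i \ge m+1$ (involves neither); in every such case $T_i.g = qg \in I_{n,k}^{(q)}$. The remaining --- and essential --- case is $g = P_k(x_1, \dots, x_m; q)$ with $i = m$, forcing $1 \le m \le n-1$; here I claim
\[
T_m.\,P_k(x_1, \dots, x_m; q) \;=\; P_k(x_1, \dots, x_{m+1}; q) \;-\; P_k(x_1, \dots, x_m; q) \;+\; q\,P_k(x_1, \dots, x_{m-1}; q),
\]
with $P_k$ of the empty variable set read as $0$ (relevant only when $m = 1$). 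Since $m+1 \le n$, all three terms on the right are generators of $I_{n,k}^{(q)}$, so the claim finishes the proof.

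To prove the claim I would pass to Hall--Littlewood $Q$-functions, with $Q_{(k)} = (1-q)P_{(k)}$, and use the generating-function identity $\sum_{r \ge 0} Q_{(r)}(x_1, \dots, x_N; q)\,u^r = \prod_{j=1}^N \frac{1-qux_j}{1-ux_j}$ (see~\cite{Macdonald}). Treating $x_1, \dots, x_{m-1}$ as spectators and writing $G(u) = \prod_{j=1}^{m-1}\frac{1-qux_j}{1-ux_j}$, a short computation gives $\partial_m.\frac{1-qux_m}{1-ux_m} = \frac{(1-q)u}{(1-ux_m)(1-ux_{m+1})}$, hence $\pib_m.\frac{1-qux_m}{1-ux_m} = \frac{(1-q)u\,x_{m+1}}{(1-ux_m)(1-ux_{m+1})}$, so that, using the product rule with the spectator factor $G(u)$,
\[
\sum_{k \ge 0}\big(T_m.\,Q_{(k)}(x_1, \dots, x_m; q)\big)u^k \;=\; G(u)\!\left(q\,\frac{1-qux_{m+1}}{1-ux_{m+1}} + \frac{(1-q)^2 u\,x_{m+1}}{(1-ux_m)(1-ux_{m+1})}\right).
\]
The bracketed rational function equals $\frac{(1-qux_m)(1-qux_{m+1})}{(1-ux_m)(1-ux_{m+1})} - \frac{1-qux_m}{1-ux_m} + q$, an identity one checks by clearing the denominator $(1-ux_m)(1-ux_{m+1})$ and comparing degree-$2$ polynomials in $u$; since $G(u)\frac{(1-qux_m)(1-qux_{m+1})}{(1-ux_m)(1-ux_{m+1})}$, $G(u)\frac{1-qux_m}{1-ux_m}$, and $G(u)$ generate $Q_{(k)}(x_1, \dots, x_{m+1}; q)$, $Q_{(k)}(x_1, \dots, x_m; q)$, and $Q_{(k)}(x_1, \dots, x_{m-1}; q)$, taking the coefficient of $u^k$ and dividing by the unit $1-q$ yields the claim. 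The crux of the whole argument is exactly this identity; everything else --- the product rule, the reduction to generators, and the case analysis --- is routine.
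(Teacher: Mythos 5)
Your proof is correct, and it follows the same overall skeleton as the paper's: the Leibniz-type rule $T_i.(fg) = (s_i.f)(T_i.g) + (1-q)(\pib_i.f)g$ (Lemma~\ref{leibniz-lemma}) reduces stability to the generators; the generators symmetric in $x_i, x_{i+1}$ are eigenvectors with eigenvalue $q$; and the whole matter comes down to the branching identity
\[
T_m.P_k(x_1,\dots,x_m;q) = P_k(x_1,\dots,x_{m+1};q) - P_k(x_1,\dots,x_m;q) + q\,P_k(x_1,\dots,x_{m-1};q),
\]
which is Equation~\eqref{main-closure-equation}. Where you diverge is in how that identity is established. The paper first expands $P_d(\xx_i;q) = \sum_{\lambda\vdash d}(1-q)^{\ell(\lambda)-1}m_\lambda(\xx_i)$ (Lemma~\ref{p-to-m-lemma}) and then compares the coefficient of an arbitrary monomial $m$ on both sides, splitting into four cases according to whether $x_i$, $x_{i+1}$ appear in $m$. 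You instead work directly with the generating function $\sum_r Q_{(r)}(\xx_N;q)u^r = \prod_{j\le N}\frac{1-qux_j}{1-ux_j}$, push $T_m$ through the spectator factor $G(u)$, and reduce the claim to the single rational-function identity
\[
q\,\frac{1-qux_{m+1}}{1-ux_{m+1}} + \frac{(1-q)^2u\,x_{m+1}}{(1-ux_m)(1-ux_{m+1})} \;=\; \frac{(1-qux_m)(1-qux_{m+1})}{(1-ux_m)(1-ux_{m+1})} - \frac{1-qux_m}{1-ux_m} + q,
\]
verified by clearing denominators (I checked the degree-$2$ polynomial identity; it holds). Your computation $\partial_m.\frac{1-qux_m}{1-ux_m} = \frac{(1-q)u}{(1-ux_m)(1-ux_{m+1})}$ and the step $\pib_m = x_{m+1}\partial_m$ are also correct. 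The two proofs of the branching identity buy different things: yours packages the whole case analysis into one algebraic identity and makes clear that $T_m$ sends the ``partial product'' generating function to a $\ZZ[q]$-linear combination of the adjacent partial products, which is conceptually tidy; the paper's monomial-by-monomial argument is more pedestrian but stays entirely within polynomial manipulations once Lemma~\ref{p-to-m-lemma} is in hand, and it is self-contained without invoking $Q$-functions. Either way the proposition follows.
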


The proof of Proposition~\ref{r-is-stable} is a slightly tedious computation and will be postponed to Section~\ref{Proofs}.

Since our Hecke parameter $q$ is generic, the $F$-algebra $H_n(q)$ is semisimple with irreducible representations
$W^{\lambda}$ indexed by partitions $\lambda \vdash n$~\cite{Macdonald}.  
If $V$ is any finite-dimensional $H_n(q)$-module, there 
exist unique nonnegative integers $c_{\lambda}$ such that $V \cong_{H_n(q)} \bigoplus_{\lambda \vdash n} c_{\lambda} W^{\lambda}$.
The \defterm{Frobenius image} of $V$ is the symmetric function
$\Frob(V) := \sum_{\lambda \vdash n} c_{\lambda} s_{\lambda}(\xx)$.
More generally, if $V = \bigoplus_{d \geq 0} V_d$ is a {\em graded} $H_n(0)$-module with each graded piece $V_d$ 
finite-dimensional, the \defterm{graded Frobenius image} of $V$ is $\grFrob(V;t) := \sum_{d \geq 0} \Frob(V_d) \cdot t^d$.
We wish to determine $\grFrob(R_{n,k}^{(q)}; t)$; to do this, we employ
a quantum deformation of the point-orbit method.

Pioneered by Garsia-Procesi~\cite{GarsiaProcesi} in the context of the Tanisaki ideals, the \defterm{point-orbit method}
gives a systematic way for producing interesting graded modules over the symmetric group $\symm_n$
(or more generally any finite matrix group $G$) from a finite set $Y$ of points in an $n$-dimensional space
 which is closed under the group action.  
 We recall their construction.
 
 Let $Y \subseteq F^n$ be a finite point set.
We denote by
 \begin{equation}
 \II(Y) := \{ f \in F[\xx_n] \,:\, f(\yy) = 0 \text{ for all $\yy \in Y$} \}
 \end{equation}
the ideal of polynomials vanishing on $Y$.  
The ideal $\II(Y)$ is usually not homogeneous; to produce a 
 homogeneous ideal we consider
 \begin{equation}
 \TT(Y) := \langle \tau(f) \,:\, f \in \II(Y) - \{ 0 \} \rangle.
 \end{equation}
Here if $f \in F[\xx_n]$ is any nonzero polynomial and $f = f_d + \cdots  + f_1 + f_0$, where $f_i$ is homogeneous
 of degree $i$ and $f_d \neq 0$, we let $\tau(f) := f_d$ be the highest degree component of $f$.  The ideal $\TT(Y)$ is 
 homogeneous by definition. We have
 \begin{equation}
 |Y| = \dim_F(F[\xx_n]/\II(Y)) = \dim_F(F[\xx_n]/\TT(Y)).
 \end{equation}
 
 Let $G \subseteq GL_n(F)$ be a finite matrix group.  The group $G$ acts on $F[\xx_n]$ by linear substitutions.
 If a finite point set $Y \subseteq F^n$ is closed under the action of $G$, we have isomorphisms of $G$-modules
 \begin{equation}
 F[Y] \cong_G F[\xx_n]/\II(Y) \cong_G F[\xx_n]/\TT(Y),
 \end{equation}
 where $F[Y]$ is the permutation representation of $G$ on $Y$.
 This has been used to produce a number of interesting graded $G$-modules:
 \begin{itemize}
 \item
 For $G = \symm_n$, and $Y$ a single $\symm_n$-orbit in $\QQ^n$, 
 Garsia--Procesi showed that $\TT(Y)$ is the \defterm{Tanisaki ideal} which governs the cohomology of a 
 Springer fiber~\cite{GarsiaProcesi}.
 \item
For $G = \symm_n$, the point-orbit method was used in~\cite{HRS} to study the ring $R_{n,k}^{(1)}$.  
After fixing distinct field elements $\alpha_1, \dots , \alpha_k \in F$,
the point set $Y$ in this case is
\[
Y = \bigl\{ (y_1, \dots, y_n) \in F^n \,:\, \{y_1, \dots, y_n\} = \{\alpha_1, \dots, \alpha_k \} \bigr\}.
\]
There is an evident bijection between $Y$ and $\OP_{n,k}$.
\item
Let $r \geq 2$ and $G = G(r,1,n)$, the group of $n \times n$ monomial matrices whose nonzero entries are $r^{th}$ roots
of unity in $\CC$.  For $k \leq n$, Chan--Rhoades~\cite{CR} used the point-orbit method (over the field $\CC$) to produce a quotient
$R_{n,k}^G$ of $\CC[\xx_n]$ whose dimension equals the number of $k$-dimensional faces in the Coxeter complex attached to 
$G$.  If $\alpha_1, \dots, \alpha_k$ are distinct positive real numbers, the point set $Y$ is 
\begin{equation*}
Y = \{ (y_1, \dots, y_n) \in \CC^n \,:\, \text{ $\{y_1^r, \dots, y_n^r \} = \{\alpha_1, \dots, \alpha_k \}$ or 
$\{0, \alpha_1, \dots, \alpha_k \}$} \}.
\end{equation*}
\end{itemize}

In the work of Huang--Rhoades on an action of the 0-Hecke algebra on ordered set partitions~\cite{HR}, the acting algebraic
object was not a group but rather the 0-Hecke algebra $H_n(0)$.  Despite this, in~\cite{HR} it is proven that if we let 
$Y$ be the point set 
\begin{equation*}
Y = \bigl\{ (y_1, \dots, y_n) \in \mathbb{F}^n \,:\,
\text{$y_1, \dots, y_n$ distinct}, 
\{\alpha_1, \dots, \alpha_k \} \subseteq \{y_1, \dots, y_n \}, 
y_i \in \{ \alpha_1, \dots, \alpha_{k+i-1} \} \bigr\}
\end{equation*}
where $\mathbb{F}$ is an arbitrary field and
$\alpha_1, \alpha_2, \dots, \alpha_{n+k-1} \in \mathbb{F}$ are distinct field elements, 
the ideal $\TT(Y)$ is closed under the action of  $H_n(0)$ on $\mathbb{F}[\xx_n]$ by isobaric
divided difference operators and the quotient $\mathbb{F}[\xx_n]/\TT(Y)$ may be identified with a
natural 0-Hecke action on $\mathbb{F}[\OP_{n,k}]$.
The point locus $Y$ used in this paper is as follows.

\begin{defn}
\label{point-set-definition}
Let $\alpha_1, \dots, \alpha_k \in \QQ$ be distinct rational numbers.  Let $Y_{n,k}^{(q)} \subseteq F^n$ be the set of points
$(y_1, \dots, y_n)$ such that
\begin{itemize}
\item  $y_i \in \{ q^j \cdot \alpha_r \,:\, j \geq 0, 1 \leq r \leq k \}$ for all $1 \leq i \leq n$,
\item  the coordinates $y_1, \dots, y_n$ are distinct,
\item  $\{ \alpha_1, \dots, \alpha_k \} \subseteq \{y_1, \dots, y_n \}$, and
\item  if $y_i = q^j \cdot \alpha_r$ for some $1 \leq i \leq n$,  $j > 0$, and $1 \leq r \leq k$, there exists $i' < i$ such that
$y_{i'} = q^{j-1} \cdot \alpha_r$.
\end{itemize} 
\end{defn}

The point set $Y_{n,k}^{(q)}$ is in bijective correspondence with $\OP_{n,k}$.  Indeed, given 
$\sigma = (B_1 \mid \cdots \mid B_k)$, we have a point $\varphi(\sigma) = (y_1, \dots, y_n) \in Y_{n,k}$ 
given by the rule $y_i = q^j \cdot \alpha_r$ if $i$ is the $(j+1)^{st}$-smallest letter in the block $B_r$ of $\sigma$.
As an example, we have 
\begin{equation*}
\varphi \colon ( 5  \mid 1 4 6 \mid 2 3 ) \mapsto (\alpha_2, \alpha_3, q \cdot \alpha_3, q \cdot \alpha_2, \alpha_1, q^2 \cdot \alpha_2).
\end{equation*}
Definition~\ref{point-set-definition} is designed so that the map $\varphi \colon \OP_{n,k} \rightarrow Y_{n,k}^{(q)}$ is a bijection.

When $q = 1$, the point set $Y_{n,k}^{(1)}$ gives the labeling of ordered set partitions used to study the
$\symm_n$-module $R_{n,k}^{(1)}$ in~\cite{HRS}.  When $q = 0$, the point set $Y_{n,k}^{(0)}$ becomes `degenerate'
when $k < n$; there are fewer points in $Y_{n,k}^{(0)}$ than there are ordered set partitions in $\OP_{n,k}$.
The point set in~\cite{HR} used to study the 0-Hecke structure of $R_{n,k}^{(0)}$ looks very different from $Y_{n,k}^{(0)}$.
As it turns out, the point set $Y_{n,k}^{(q)}$ gives rise to the quotient $R_{n,k}^{(q)}$ and in this way is a quantum deformation
of the point set used in~\cite{HRS}.

\begin{theorem}
\label{point-set-theorem}
Let $k \leq n$ be positive integers.  We have $\TT(Y_{n,k}^{(q)}) = I_{n,k}^{(q)}$, so that we have the identification of quotients
$F[\xx_n]/\TT(Y_{n,k}^{(q)}) = R_{n,k}^{(q)}$.
\end{theorem}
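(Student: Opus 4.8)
The plan is to prove the two containments $\TT(Y_{n,k}^{(q)}) \supseteq I_{n,k}^{(q)}$ and $\TT(Y_{n,k}^{(q)}) \subseteq I_{n,k}^{(q)}$ separately, using the dimension count $|Y_{n,k}^{(q)}| = |\OP_{n,k}| = \dim_F(F[\xx_n]/\TT(Y_{n,k}^{(q)}))$ to leverage one direction into equality. Concretely, I would first establish the easier containment $I_{n,k}^{(q)} \subseteq \TT(Y_{n,k}^{(q)})$ by showing that each listed generator of $I_{n,k}^{(q)}$ is the top-degree part $\tau(f)$ of some polynomial $f$ vanishing on $Y_{n,k}^{(q)}$. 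Then, if I can show $\dim_F(F[\xx_n]/I_{n,k}^{(q)}) \leq |\OP_{n,k}|$ (which would have to come from a separate Hilbert-series or standard-monomial-basis argument, presumably established elsewhere in the paper and available to cite), the equality $\dim_F(F[\xx_n]/I_{n,k}^{(q)}) = |Y_{n,k}^{(q)}| = \dim_F(F[\xx_n]/\TT(Y_{n,k}^{(q)}))$ together with the containment forces $I_{n,k}^{(q)} = \TT(Y_{n,k}^{(q)})$.

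For the containment $I_{n,k}^{(q)} \subseteq \TT(Y_{n,k}^{(q)})$, I would handle the two families of generators as follows. For the elementary symmetric polynomials $e_{n-j}(\xx_n)$ with $0 \le j \le k-1$: every point $\yy = (y_1,\dots,y_n) \in Y_{n,k}^{(q)}$ has all coordinates lying in the set $S = \{q^j \alpha_r : j \ge 0,\ 1 \le r \le k\}$, but more importantly, the constraint in Definition~\ref{point-set-definition} about staircase prefixes means the multiset of coordinates is governed by a bounded supply; I would argue that $\prod_{i=1}^n (t - y_i)$, expanded in $t$, has its coefficients $e_j(\yy)$ satisfying relations that, after passing to top-degree parts, yield $e_{n}(\xx_n), \dots, e_{n-k+1}(\xx_n) \in \TT(Y_{n,k}^{(q)})$ — mimicking the $q=1$ argument of Haglund--Rhoades--Shimozono, where one observes that each $y_i$ satisfies a degree-$k$ polynomial over the base and deduces the top-heavy generators. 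For the Hall--Littlewood generators $P_k(x_1,\dots,x_i;q)$: the key point is that for a point in $Y_{n,k}^{(q)}$, the first $i$ coordinates $y_1, \dots, y_i$ use at most $k$ distinct values from the orbit structure, and I would exhibit an explicit polynomial in $x_1, \dots, x_i$ vanishing on all of $Y_{n,k}^{(q)}$ whose top-degree component is exactly $P_k(x_1,\dots,x_i;q)$; the natural candidate is built from the product $\prod_{r}(\text{something involving } x_j - q^\bullet \alpha_r)$ or from the known expansion of $P_k$ in terms of the modified complete homogeneous / Milne-type symmetric functions, using that $P_k(x_1,\dots,x_i;q)$ is, up to the top term, the generating function counting how coordinates can "escape" past the $i$-th position.

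The main obstacle I anticipate is the precise identification, for the Hall--Littlewood family, of a vanishing polynomial whose top term is $P_k(x_1,\dots,x_i;q)$: unlike the $q=1$ case where $P_k(x_1,\dots,x_i;1) = x_1^k + \cdots + x_i^k$ makes the vanishing polynomial transparent (it is $\sum_{j\le i}\prod_{r}(x_j - \alpha_r)$ or similar), for general $q$ one must understand how the fourth bullet of Definition~\ref{point-set-definition} — the requirement that $q^j\alpha_r$ appears only if $q^{j-1}\alpha_r$ appeared earlier — interacts with the Hall--Littlewood polynomial's combinatorics. I expect this is where the factor $q^j\alpha_r$ in the point locus was reverse-engineered to match $P_k(\xx;q)$, and the verification will require either a direct expansion using the formula~\eqref{pib}-adjacent combinatorics of $P_k$, or an inductive argument on $i$ building the vanishing polynomial for the prefix of length $i$ from that of length $i-1$. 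Once both generating families are shown to lie in $\TT(Y_{n,k}^{(q)})$, the dimension squeeze closes the proof; specializing at $q=1$ should recover the Haglund--Rhoades--Shimozono computation as a consistency check.
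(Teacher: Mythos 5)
Your high-level strategy matches the paper's exactly: establish the containment $I_{n,k}^{(q)} \subseteq \TT(Y_{n,k}^{(q)})$ by exhibiting, for each generator, a polynomial vanishing on $Y_{n,k}^{(q)}$ whose top-degree component is that generator, and then squeeze with a dimension count. You even handle the logical structure a bit more carefully than the paper's written proof does: you correctly note that the upper bound $\dim_F(F[\xx_n]/I_{n,k}^{(q)}) \leq |\OP_{n,k}|$ must come from an independent source (the Gr\"obner/leading-monomial count, which does not depend on the theorem being proved), whereas the paper cites Corollary~\ref{ungraded-frobenius-image} here even though that corollary chain-depends on Theorem~\ref{point-set-theorem}.

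However, there is a genuine gap precisely where you flag one: you never actually produce the vanishing polynomial whose top component is $P_k(x_1,\dots,x_i;q)$, and without it the containment argument does not go through. The paper's resolution is a generating-function telescope worth knowing. For the $e$-generators one observes that the coefficient of $t^d$ in $\prod_{j=1}^n (1+x_j t)\big/\prod_{r=1}^k(1+\alpha_r t)$, namely $\sum_{i=0}^d (-1)^{d-i} e_i(\xx_n)h_{d-i}(\alpha_1,\dots,\alpha_k)$, vanishes on $Y_{n,k}^{(q)}$ for $d > n-k$ because the third bullet of Definition~\ref{point-set-definition} (every $\alpha_r$ actually appears as a coordinate) makes the denominator cancel $k$ factors of the numerator, leaving a degree-$(n-k)$ polynomial in $t$. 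For the Hall--Littlewood generators, one uses the generating function $\sum_d (1-q)P_d(x_1,\dots,x_i;q)t^d = \prod_{j=1}^i \frac{1-qx_j t}{1-x_j t}$ and multiplies by $\prod_{r=1}^k(1-\alpha_r t)$; at a point $\varphi(\sigma)$ of $Y_{n,k}^{(q)}$, the fourth bullet of Definition~\ref{point-set-definition} makes the product of fractions telescope within each block, and the whole expression collapses to $\prod_{r=1}^k(1-q^{i_r}\alpha_r t)$ where $i_r$ counts entries $\leq i$ in block $r$. The coefficient of $t^k$ is then the constant $(-1)^k q^{i_1+\cdots+i_k}\alpha_1\cdots\alpha_k = (-1)^k q^i\alpha_1\cdots\alpha_k$, independent of the point. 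Thus $\sum_{j=0}^k(-1)^{k-j}(1-q)P_j(x_1,\dots,x_i;q)e_j(\alpha_1,\dots,\alpha_k) - (-1)^k q^i\alpha_1\cdots\alpha_k$ lies in $\II(Y_{n,k}^{(q)})$, and its top-degree component, after dividing by the nonzero scalar $1-q$, is $P_k(x_1,\dots,x_i;q)$. Your guesses at a product of linear factors in the $x_j$'s or an induction on $i$ would likely not produce this cleanly; the right move is to work with the generating function in the auxiliary variable $t$ and let the point-locus structure do the cancellation.
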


We will prove Theorem~\ref{point-set-theorem} in Section~\ref{Proofs}.  

The quantum deformation involved in Definition~\ref{point-set-definition}
can be used to define graded $H_n(q)$-modules in a broader context.  In particular,
let $Y \subset \QQ^n$ be any finite point set which is closed under the action of 
$\symm_n$.
Given any point $y = (y_1, \dots, y_n) \in Y$, consider the point
$y^{(q)} = (y_1^{(q)}, \dots, y_n^{(q)}) \in F^n$ given by
$y_i^{(q)} = q^{j-1} \cdot y_i$, where $i$ is the $j^{th}$ occurrence of the rational
number $y_i$ in the list $(y_1, \dots, y_n)$.
As an example, if $\alpha_1, \alpha_2, \alpha_3 \in \QQ$ are distinct rational numbers
and $y = (\alpha_2, \alpha_1, \alpha_2, \alpha_2, \alpha_3, \alpha_1)$, then
$y^{(q)} = (\alpha_2, \alpha_1, q \cdot \alpha_2, q^2 \cdot \alpha_2, \alpha_3, q \cdot \alpha_1)$.
We consider the new point set $Y^{(q)} := \{ y^{(q)} \,:\, y \in Y \} \subset F^n$, so that 
(for example) if
\begin{equation*}
Y = \{ (\alpha_1, \alpha_1, \alpha_2), 
(\alpha_1, \alpha_2, \alpha_1), 
(\alpha_2, \alpha_1, \alpha_1) \},
\end{equation*}
for $\alpha_1 \neq \alpha_2$, then
\begin{equation*}
Y^{(q)} = \{ (\alpha_1, q \cdot \alpha_1, \alpha_2),
(\alpha_1, \alpha_2, q \cdot \alpha_1),
(\alpha_2, \alpha_1, q \cdot \alpha_1) \}.
\end{equation*}

The point set $Y$ can be recovered from the point set $Y^{(q)}$ by setting $q = 1$, but
the quantization $Y \leadsto Y^{(q)}$ has the effect of breaking $\symm_n$-symmetry;
the homogeneous ideal $\TT(Y^{(q)}) \subset F[\xx_n]$ is usually not $\symm_n$-stable.
On the other hand, Kyle Meyer (personal communication) proved that 
$\TT(Y^{(q)})$ {\em is} stable under the action of $H_n(q)$ on 
$F[\xx_n]$, so that $F[\xx_n]/\TT(Y^{(q)})$ is a graded $H_n(q)$-module
of dimension $|Y|$.
Therefore, any graded $\symm_n$-module constructed by the point-orbit
method has a natural companion graded $H_n(q)$-module.
Although there are examples where the graded dimensions (i.e. the Hilbert series)
of $\QQ[\xx_n]/\TT(Y)$ and $F[\xx_n]/\TT(Y^{(q)})$ are different, this
does not happen in our context; to prove this, we will describe the 
Gr\"obner basis of $I_{n,k}^{(q)}$.

The Gr\"obner theory of the ideal $I_{n,k}^{(q)}$ is a straightforward $q$-analog of the corresponding theory for $I_{n,k}$.
We consider the term order $<$ on monomials in $F[\xx_n]$ given by $x_1^{a_1} \cdots x_n^{a_n} < x_1^{b_1} \cdots x_n^{b_n}$
if there exists $1 \leq i \leq n$ with $a_i < b_i$ and $a_{i+1} = b_{i+1}, \dots, a_n = b_n$.  Following the notation of
$\textsc{SageMath}$~\cite{sage}, we call this term order $\texttt{neglex}$.

Recall that a \defterm{shuffle} of two sequences $(a_1, \dots, a_r)$ and $(b_1, \dots, b_s)$ is an interleaving
$(c_1, \dots, c_{r+s})$ of these sequences which preserves the relative order of the $a$'s and the $b$'s.  
An \defterm{$(n,k)$-staircase} is a shuffle of the sequences $(k-1, \dots, 1, 0)$ and
$(k-1, k-1, \dots, k-1)$, where the second sequence has $n-k$ copies of $k-1$.  For example, the $(5,3)$-staircases are
\begin{equation*}
(2,2,2,1,0), (2,2,1,2,0), (2,2,0,1,2), (2,1,2,2,0), (2,1,2,0,2), \text{ and } (2,1,0,2,2).
\end{equation*}
The \defterm{$(n,k)$-Artin monomials} $\AAA_{n,k}$ are those monomials $x_1^{a_1} \cdots x_n^{a_n}$ in the variables 
$x_1, \dots, x_n$ whose exponent sequences $(a_1, \dots, a_n)$ are componentwise $\leq$ some $(n,k)$-staircase.
These are `reverse to' the $(n,k)$-Artin monomials as defined in~\cite{HRS}.

If $\gamma = (\gamma_1, \dots, \gamma_n)$ is a weak composition with $n$ parts, we let $\kappa_{\gamma}(\xx_n) \in F[\xx_n]$
be the corresponding \defterm{Demazure character}; see~\cite{HRS} for its definition.
If $S  = \{s_1 < \cdots < s_r \} \subseteq [n]$ is any subset, we consider the \defterm{skip composition} 
$\gamma(S) = (\gamma_1, \dots, \gamma_n)$ defined by
\begin{equation*}
\gamma_i = \begin{cases}
s_j - j + 1 & \text{if $i = s_j \in S$,} \\
0 & \text{if $i \notin S$.}
\end{cases}
\end{equation*}
We will also need the reverse $\gamma(S)^* = (\gamma_n, \dots, \gamma_1)$ of this weak composition.
As an example, if $n = 6$ and $S = \{2,5,6\}$ then $\gamma(S) = (0,2,0,0,4,4)$ and $\gamma(S)^* = (4,4,0,0,2,0)$.

\begin{corollary}
\label{groebner-corollary}
Let $k \leq n$ be positive integers and
give monomials in $F[\xx_n]$ the term order $\texttt{neglex}$.  
The standard monomial basis for the ideal $I_{n,k}^{(q)}$ is the set $\AAA_{n,k}$ of $(n,k)$-Artin monomials.
A Gr\"obner basis for the ideal $I_{n,k}^{(q)}$ is given by
the Hall-Littlewood $P$-functions
\begin{equation*}
P_k(x_1;q), P_k(x_1, x_2; q), \dots, P_k(x_1, x_2, \dots, x_n; q)
\end{equation*}
together with 
the  Demazure characters
\begin{equation*}
\kappa_{\gamma(S)^*}(\xx_n) \text{ for all } S \subseteq [n-1] \text{ such that } |S| = n-k+1.
\end{equation*}
If $k < n$, this Gr\"obner basis is minimal.  
\end{corollary}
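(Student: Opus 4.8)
The plan is to reduce the whole statement to a dimension squeeze, using the point-locus identification already in hand and importing the purely combinatorial content from~\cite{HRS}. By Theorem~\ref{point-set-theorem}, the point-orbit identity $|Y| = \dim_F(F[\xx_n]/\TT(Y))$, and the bijection $\varphi\colon\OP_{n,k}\to Y_{n,k}^{(q)}$, we have $\dim_F R_{n,k}^{(q)} = |\OP_{n,k}|$. Two facts from~\cite{HRS} are $q$-free and I would use them verbatim (after the variable reversal $x_i\leftrightarrow x_{n+1-i}$, which is exactly why our Artin monomials are the reverses of theirs): (i) $|\AAA_{n,k}| = |\OP_{n,k}|$; and (ii) the monomials \emph{not} in $\AAA_{n,k}$ are precisely the monomials lying in the monomial ideal whose minimal generators are $x_1^k,\dots,x_n^k$ together with $x^{\gamma(S)^*}$ for $S\subseteq[n-1]$ with $|S| = n-k+1$, and these minimal generators form an antichain under divisibility. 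Granting (i) and (ii), it remains to exhibit the claimed polynomials inside $I_{n,k}^{(q)}$ and to identify their $\texttt{neglex}$ leading terms.

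First I would pin down the leading terms. The polynomial $P_k(x_1,\dots,x_i;q)$ is homogeneous of degree $k$ and involves only $x_1,\dots,x_i$, and setting $x_1=\cdots=x_{i-1}=0$ turns it into the one-variable Hall--Littlewood polynomial $P_{(k)}(x_i;q) = x_i^k$; hence $x_i^k$ occurs in it with coefficient $1$ and is its unique $\texttt{neglex}$-largest monomial, so $\LT(P_k(x_1,\dots,x_i;q)) = x_i^k$ for $1\le i\le n$. The Demazure character $\kappa_{\gamma(S)^*}(\xx_n)$ does not depend on $q$, and the identity $\LT(\kappa_{\gamma(S)^*}(\xx_n)) = x^{\gamma(S)^*}$ is the corresponding statement in~\cite{HRS}, whose proof uses only the skip-composition structure of $\gamma(S)$.

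The hard part will be showing that every $\kappa_{\gamma(S)^*}(\xx_n)$ with $S\subseteq[n-1]$, $|S|=n-k+1$, actually lies in $I_{n,k}^{(q)}$ --- the Hall--Littlewood polynomials are among the defining generators, so for them there is nothing to prove. My plan is to $q$-deform the relevant computation of~\cite{HRS}: by induction along a suffix of $S$, and using the branching rule $e_d(x_j,\dots,x_n) = e_d(x_{j+1},\dots,x_n) + x_j\, e_{d-1}(x_{j+1},\dots,x_n)$ together with the recursion it induces on Demazure characters of skip compositions, one writes each $\kappa_{\gamma(S)^*}(\xx_n)$ as an explicit $F[\xx_n]$-linear combination of $e_{n-k+1}(\xx_n),\dots,e_n(\xx_n)$ and the polynomials $P_k(x_1,\dots,x_i;q)$. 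At $q=1$ this is verbatim the argument of~\cite{HRS} with $x_i^k$ in the role of $P_k(x_1,\dots,x_i;q)$; for general $q$ one substitutes $x_i^k = P_k(x_1,\dots,x_i;q) - \bigl(P_k(x_1,\dots,x_i;q)-x_i^k\bigr)$, the correction being a polynomial in $x_1,\dots,x_i$ of $x_i$-degree $<k$, and carries these corrections through the induction. Since the corrections are always $\texttt{neglex}$-smaller, they never disturb the leading-term bookkeeping, and the membership goes through. I expect this to be the only genuinely technical part.

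Finally I would set $G$ equal to the proposed generating set and let $M$ be the monomial ideal generated by the leading terms of its elements. By the previous two steps $G\subseteq I_{n,k}^{(q)}$, hence $M\subseteq\initial_{\texttt{neglex}}(I_{n,k}^{(q)})$, and therefore
\[
|\OP_{n,k}| = \dim_F F[\xx_n]/M \ \geq\ \dim_F F[\xx_n]/\initial_{\texttt{neglex}}(I_{n,k}^{(q)}) \ = \ \dim_F R_{n,k}^{(q)} \ = \ |\OP_{n,k}|,
\]
the first equality by (i) and (ii). Equality throughout forces $M = \initial_{\texttt{neglex}}(I_{n,k}^{(q)})$, which is exactly the assertion that $G$ is a $\texttt{neglex}$ Gr\"obner basis of $I_{n,k}^{(q)}$ and that its standard monomials are $\AAA_{n,k}$. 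For minimality when $k<n$: the $n + \binom{n-1}{n-k+1}$ leading terms of the elements of $G$ are pairwise distinct, and they form an antichain under divisibility. Indeed the $x_i^k$ are pairwise incomparable; each $\gamma(S)$ (for $S\subseteq[n-1]$, $|S|=n-k+1$) has all parts $\le k-1$ --- because $s_j\le k-2+j$ for every $j$ --- and at least two positive parts, since $|S|\ge 2$ when $k<n$, so no $x_i^k$ divides or is divided by any $x^{\gamma(S)^*}$; and the $x^{\gamma(S)^*}$ are pairwise incomparable by (ii). Hence the Gr\"obner basis $G$ is minimal.
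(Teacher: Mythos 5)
Your overall framework --- dimension squeeze via the point locus, identification of $\texttt{neglex}$ leading terms, ideal membership of the claimed generators, and the antichain argument for minimality --- matches the paper's, and your leading-term identifications, the squeeze, and the minimality argument are all correct (your minimality discussion is in fact more detailed than what the paper writes). But the step you flag as ``the only genuinely technical part'' has a real gap, and the fix is in the opposite direction from where you are looking.

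To show $\kappa_{\gamma(S)^*}(\xx_n) \in I_{n,k}^{(q)}$, you propose to $q$-deform the \cite{HRS} computation by substituting $x_i^k = P_k(x_1,\dots,x_i;q) - \bigl(P_k(x_1,\dots,x_i;q) - x_i^k\bigr)$ and ``carrying the corrections through the induction,'' with the justification that the corrections are always $\texttt{neglex}$-smaller, so they ``never disturb the leading-term bookkeeping, and the membership goes through.'' That is not a proof of ideal membership. If at $q=1$ one had $\kappa_{\gamma(S)^*}(\xx_n) = A + \sum_i g_i x_i^k$ with $A \in \langle e_n(\xx_n),\dots,e_{n-k+1}(\xx_n)\rangle$, then at generic $q$ your substitution leaves a residual $-\sum_i g_i \bigl(P_k(x_1,\dots,x_i;q)-x_i^k\bigr)$, and having small $\texttt{neglex}$ leading terms has no bearing on whether that residual lies in $I_{n,k}^{(q)}$. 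Whether it can be absorbed is exactly the question you were trying to answer, and the induction that would answer it is the part you left unwritten.

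The actual resolution is much simpler, and it is what the paper is invoking via \cite[Lem.\ 3.4, Eqn.\ 3.4]{HRS}: for $S\subseteq[n-1]$ with $|S|=n-k+1$, the Demazure character $\kappa_{\gamma(S)^*}(\xx_n)$ already lies in the ideal generated by $e_n(\xx_n),\dots,e_{n-k+1}(\xx_n)$ \emph{alone}; the variable powers $x_i^k$ play no role in the \cite{HRS} membership argument. Since these elementary symmetric polynomials are $q$-independent and remain generators of $I_{n,k}^{(q)}$ for every $q$, the membership transfers verbatim and there is nothing to deform. Once you substitute this observation for your proposed deformation, the rest of your argument goes through as written. (One minor structural caveat, shared with the paper: the dimension count $\dim_F R_{n,k}^{(q)} = |\OP_{n,k}|$ should be justified at this stage only via the inclusion $I_{n,k}^{(q)} \subseteq \TT(Y_{n,k}^{(q)})$, which gives $\dim_F R_{n,k}^{(q)} \ge |\OP_{n,k}|$ without any appeal to the later corollaries; your upper bound from $\AAA_{n,k}$ then closes the squeeze.)
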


\begin{proof}
It is well known (see, {\it e.g.},~\cite[Lem. 3.5]{HRS}) that the $\texttt{neglex}$-leading monomial of the 
Demazure character $\kappa_{\gamma(S)^*}(\xx_n)$ is $x_1^{\gamma_n} \cdots x_{n-1}^{\gamma_2} x_n^{\gamma_1}$
for any subset $S \subseteq [n-1]$ with $|S| = n-k+1$ and  $\gamma(S) = (\gamma_1, \dots, \gamma_n)$.
The $\texttt{neglex}$-leading monomial of the polynomial $P_k(x_1, \dots, x_i; q)$ is the variable power $x_i^k$.

By~\cite[Thm. 4.9]{HRS}, the number of monomials in $F[\xx_n]$ which are not divisible by any of the leading
monomials in the above paragraph equals $|\OP_{n,k}|$.   
Moreover,~\cite[Lem 3.4]{HRS} 
(and in particular~\cite[Eqn. 3.4]{HRS}) shows that the Demazure characters appearing in the statement of the corollary
actually lie in the ideal $I_{n,k}^{(q)}$, so that the leading monomials in the above paragraph are actually all leading monomials
of polynomials in $I_{n,k}^{(q)}$.  Theorem~\ref{point-set-theorem} implies $\dim(R_{n,k}^{(q)}) = |\OP_{n,k}|$,
so that the polynomials in the statement of the present corollary form a Gr\"obner basis of $I_{n,k}^{(q)}$.
The statement about minimality when $k < n$ comes from the forms of the leading monomials in the above paragraph.

The claim about the standard monomial basis of $R_{n,k}^{(q)}$ being $\AAA_{n,k}$ is obtained from~\cite[Thm. 4.13]{HRS}
by reversing the variable order $(x_1, \dots, x_n) \leadsto (x_n, \dots, x_1)$.
\end{proof}

Given a permutation $w \in \symm_n$ with one-line notation $w = w_1 \dots w_n$, the associated \defterm{Garsia-Stanton
monomial} is $gs_w := \prod_{w_i > w_{i+1}} x_{w_1} x_{w_2} \cdots x_{w_i}$.  Since $P_k(x_1, \dots, x_i; q)$
has the form
\begin{equation*}
P_k(x_1, \dots, x_i; q) = x_i^k + \text{other homogeneous degree $k$ terms involving $x_1, \dots, x_i$},
\end{equation*}
the proof of~\cite[Lem. 4.2]{HR} (see also~\cite[Cor. 4.3]{HR})
goes through directly to show that the \defterm{generalized GS monomials}
\begin{equation}
\GS_{n,k} := \{ gs_w \cdot x_{w_1}^{i_1} x_{w_2}^{i_2} \cdots x_{w_{n-k}}^{i_{n-k}} \,:\, 
w \in \symm_n,  k - \des(w) > i_1 \geq i_2 \geq \cdots \geq i_{n-k} \geq 0 \}
\end{equation}
also descend to a basis for $R_{n,k}^{(q)}$; we omit the details.

Let $\SYT(n)$ be the set of standard Young tableaux with $n$ boxes.  For a tableau $T \in \SYT(\lambda)$, let 
$\mathrm{sh}(T) \vdash n$ be the partition given by the shape of $T$.  An index $1 \leq i \leq n-1$ is a \defterm{descent}
of $T$ if $i$ appears above $i+1$ in $T$ (drawn in the English notation).  Let $\Des(T)$ be the set of all descents of $T$, let $\des(T) := |\Des(T)|$
be the number of descents of $T$, and let $\maj(T) := \sum_{i \in \Des(T)} i$ be the \defterm{major index} of $T$.
We use the following $t$-analogs of numbers, factorials, and binomial coefficients:
\begin{equation}
[n]_t := 1 + t + \cdots + t^{n-1}, \qquad [n]!_t := [n]_t [n-1]_t \cdots [1]_t, \qquad {n \brack k}_t := \frac{[n]!_t}{[k]!_t [n-k]!_t},
\end{equation}
with the understanding that ${n \brack k}_t = 0$ if $n < k$ or $k < 0$.

\begin{corollary}
\label{graded-frobenius-image}
Let $k \leq n$ be positive integers.  The graded Frobenius image of the $H_n(q)$-module
$R_{n,k}^{(q)}$ has Schur expansion
\begin{equation}
\grFrob(R_{n,k}^{(q)}; t) = \sum_{T \in \SYT(n)} t^{\maj(T)} {n - \des(T) - 1 \brack n-k}_t s_{\mathrm{sh}(T)}(\xx).
\end{equation}
\end{corollary}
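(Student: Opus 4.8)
The plan is to compute the graded Frobenius image by exploiting the fact, established in Corollary~\ref{groebner-corollary}, that the generalized Garsia--Stanton monomials $\GS_{n,k}$ (or equivalently the $(n,k)$-Artin monomials $\AAA_{n,k}$) form a basis of $R_{n,k}^{(q)}$, together with the standard branching/specialization machinery relating Hecke-algebra Frobenius images to quasisymmetric generating functions. First I would recall that for a graded $H_n(q)$-module $V$ with $q$ generic, the graded Frobenius image is determined by the bigraded ``character'' data, and since $q$ is generic $H_n(q)$ is semisimple with the same representation theory as $\symm_n$; in particular, $\grFrob$ is additive on short exact sequences and on direct sums, so it suffices to understand $R_{n,k}^{(q)}$ as a graded $H_n(q)$-module up to isomorphism. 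The key observation is that the $q=1$ case is exactly Theorem~4.11 (or the analogous statement) of~\cite{HRS}, which gives precisely the claimed formula with $\symm_n$ in place of $H_n(q)$, so the entire content of this corollary is that passing from $q=1$ to generic $q$ does not change the graded isomorphism type.

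The cleanest route, which I would pursue, is to show directly that $R_{n,k}^{(q)} \cong R_{n,k}^{(1)}$ as graded representations (of $H_n(q)$ versus $\symm_n$, matched under the standard semisimple identification of their representation categories), via a Hilbert-series-plus-specialization argument. Concretely: Corollary~\ref{groebner-corollary} shows the standard monomial basis $\AAA_{n,k}$ of $R_{n,k}^{(q)}$ is independent of $q$, hence the Hilbert series $\Hilb(R_{n,k}^{(q)};t)$ is independent of $q$ and equals $\Hilb(R_{n,k}^{(1)};t) = \Hilb(R_{n,k};t)$, which by~\cite{HRS} is $\sum_{w \in \symm_n} t^{\maj(w)} {n-\des(w) \brack n-k}_t$ or the equivalent $\SYT$-sum at the level of graded dimension. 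For the finer Frobenius statement, I would build an explicit filtration of $R_{n,k}^{(q)}$ by $H_n(q)$-submodules whose successive quotients are the Hecke-algebra analogues of the pieces appearing in the $\symm_n$-filtration of~\cite{HRS}: there one filters $R_{n,k}$ by ideals generated by sub-collections of GS-type monomials, identifying the graded pieces with (shifts of) induction products $\mathrm{Ind}_{\symm_{\lambda}}^{\symm_n} \triv$ or with Demazure-type pieces whose Frobenius images are known. The Hecke deformation of each such induction product has Frobenius image given by the same Hall--Littlewood/modified-Macdonald formula (this is classical; cf.\ the Hall--Littlewood interpretation of $H_n(q)$-induction products), so the graded pieces match term by term, and additivity of $\grFrob$ finishes the computation.

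Alternatively — and this may in fact be the shorter write-up — I would invoke the general principle sketched in the paragraph preceding Corollary~\ref{groebner-corollary}: by Meyer's observation, $F[\xx_n]/\TT(Y^{(q)})$ is a graded $H_n(q)$-module of ungraded dimension $|Y|$, and for $Y = Y_{n,k}^{(1)}$ (the HRS point locus) one has $Y^{(q)} = Y_{n,k}^{(q)}$ up to the reparametrization in Definition~\ref{point-set-definition}, so $R_{n,k}^{(q)} = F[\xx_n]/\TT(Y_{n,k}^{(q)})$ by Theorem~\ref{point-set-theorem}. One then shows that the ungraded $H_n(q)$-module $F[\OP_{n,k}]$ defined in the introduction has the same Frobenius image as the $\symm_n$-module $\QQ[\OP_{n,k}]$ — this is a direct check that the $H_n(q)$-action on $F[\OP_{n,k}]$ is, under the semisimple identification, isomorphic to the permutation action of $\symm_n$, which follows since the action matrix of each $T_i$ specializes correctly and the module is a sum of the $H_n(q)$-analogues of $\mathrm{Ind}_{\symm_{\lambda}}^{\symm_n}\triv$ over compositions $\lambda$ refining $k$ blocks — and hence $\Frob(R_{n,k}^{(q)}) = \Frob(R_{n,k}^{(1)})$ ungraded. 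Combined with the $q$-independence of the Hilbert series from Corollary~\ref{groebner-corollary} and the known grading of the $\symm_n$-module structure via the $\AAA_{n,k}$ (or $\GS_{n,k}$) basis and the descent statistics of associated permutations/tableaux as in~\cite{HRS}, the graded formula follows.

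The main obstacle is the step of pinning down the graded $H_n(q)$-module structure, not just the graded dimension: $q$-independence of the Hilbert series is immediate from the Gröbner basis, and the ungraded Frobenius image is a routine character computation, but matching the \emph{grading} requires either (i) producing an explicit degree-preserving $H_n(q)$-filtration whose subquotients are recognizable induction products — which means redoing, in the Hecke setting, the combinatorial bookkeeping of~\cite{HRS} relating the Artin/GS basis to descent sets — or (ii) arguing by a deformation/flatness principle that a graded module over $F[q,q^{-1}]$ whose specialization at $q=1$ is $R_{n,k}^{(1)}$ and whose Hilbert series is constant must have generic fiber with the same graded character. I expect to take route (i), leaning heavily on the fact (already used in the excerpt) that the proofs of~\cite[Lem.~4.2, Cor.~4.3]{HR} and the relevant lemmas of~\cite{HRS} ``go through directly,'' so that the bulk of the argument is citing that the $q$-deformation is inert at each stage and only the final assembly via $\grFrob$-additivity needs to be spelled out.
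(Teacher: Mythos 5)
Your proposal correctly identifies the essential inputs --- the $q$-independence of the Hilbert series coming from Corollary~\ref{groebner-corollary}, and a mechanism for transferring graded multiplicities between $H_n(q)$-irreducibles and $\symm_n$-irreducibles --- and you even name both the filtration approach (your route (i)) and the specialization argument (your route (ii)). The paper takes route (ii), not your preferred route (i). Concretely: because the Gr\"obner basis of $I_{n,k}^{(q)}$ has $q$-free leading terms, the Artin monomial basis of each graded piece $(R_{n,k}^{(q)})_d$ is $q$-independent, the $T_i$-action matrices in this basis are defined at $q=1$, and specializing recovers the $\symm_n$-action matrices on $(R_{n,k}^{(1)})_d$. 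Coupling this with the Geck--Pfeiffer fact~\cite{GeckPfeiffer} that each $H_n(q)$-irreducible $W^\lambda$ specializes at $q=1$ to $S^\lambda$, semisimplicity forces the multiplicities $c_\lambda$ in $(R_{n,k}^{(q)})_d \cong \bigoplus_\lambda c_\lambda W^\lambda$ to agree with those of $(R_{n,k}^{(1)})_d \cong \bigoplus_\lambda c_\lambda S^\lambda$, and the formula is then read off from~\cite{HRS}. Your route (i) would also work but would mean re-running the HRS filtration combinatorics over $H_n(q)$, which is far more labor than the short specialization argument. Separately, your middle alternative --- deriving the graded formula from the ungraded Frobenius image of $F[\OP_{n,k}]$ plus the $q$-independent Hilbert series --- has a real gap as stated: ungraded character together with Hilbert series does not in general determine the graded character, so that route still needs the specialization (or filtration) input you describe under (ii).
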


\begin{proof}
For $\lambda \vdash n$, let $S^{\lambda}$ be the corresponding irreducible representation of the symmetric group
$\symm_n$.
If we fix a basis for any $H_n(q)$-irreducible $W^{\lambda}$, it is well known 
(see, {\it e.g.},~\cite[Thm. 8.1.7]{GeckPfeiffer})
that the representing matrix for any 
generator $T_i$ acting on $W^{\lambda}$ specializes to a representing matrix for the adjacent transposition
$s_i = (i, i+1)$ acting on the corresponding $\symm_n$-irreducible $S^{\lambda}$ at $q = 1$.

By Proposition~\ref{r-is-stable}, the quotient $R_{n,k}^{(q)}$ is a graded $H_n(q)$-module.  Since $H_n(q)$
is semisimple, for any degree $d$ there exist unique integers $c_{\lambda} \geq 0$ such that the $d^{th}$
graded piece $(R_{n,k}^{(q)})_d$ decomposes into irreducibles as 
$(R_{n,k}^{(q)})_d \cong_{H_n(q)} \bigoplus_{\lambda \vdash n} c_{\lambda} W^{\lambda}$.
By Corollary~\ref{groebner-corollary} and \cite[Thm. 4.14]{HRS}, 
the modules $R_{n,k}^{(q)}$ and $R_{n,k}^{(1)}$ have the same Hilbert series,
so that $\dim (R_{n,k}^{(q)})_d = \dim (R_{n,k}^{(1)})_d$.  We may therefore consider
the representing matrices for the action of $\symm_n$ on $(R_{n,k}^{(1)})_d$
(defined over $\QQ$) as the $q = 1$ specializations of the corresponding representing
matrices (defined over $F$) for the action of $H_n(q)$ on $(R_{n,k}^{(q)})_d$.
The above paragraph implies that we have a $\symm_n$-module decomposition
$(R_{n,k}^{(1)})_d \cong_{\symm_n} \bigoplus_{\lambda \vdash n} c_{\lambda} S^{\lambda}$
involving the same multiplicities $c_{\lambda}$.
The result follows from the calculation of $\grFrob(R_{n,k}^{(1)}; t)$ in~\cite{HRS}.
\end{proof}

For example, we have
\[ \grFrob(R_{4,2}^{(q)}; t) =  t^0 {3\brack 2}_t s_{(4)}(\xx)
+ ( t^3 + t^2 + t^1 ) {2\brack 2}_t s_{(3,1)}(\xx) 
+ t^2 {2\brack 2}_t s_{(2,2)}(\xx). \]
Adin, Brenti, and Roichman \cite{ABR} studied a refinement of the classical coinvariant ring
$R_n = R_{n,n}$ indexed by all possible partitions 
$\lambda$ with $\leq n$ parts which is finer than the degree grading and
whose module structure is governed
by descent sets of  tableaux $T \in \SYT(n)$.
Meyer \cite[Thm. 1.4]{Meyer} extended this result to the quotients $R_{n,k}$ for $k \leq n$.
It may be interesting to refine Corollary~\ref{graded-frobenius-image} to 
obtain a quantum analog of Meyer's results.

The quotient $R_{n,k}^{(q)}$ gives a graded refinement of the action of $H_n(q)$ on $F[\OP_{n,k}]$.

\begin{corollary}
\label{ungraded-frobenius-image}
Let $k \leq n$ be positive integers.  We have $R_{n,k}^{(q)} \cong_{H_n(q)} F[\OP_{n,k}]$ as ungraded $H_n(q)$-modules.
\end{corollary}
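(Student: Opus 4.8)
The plan is to deduce the ungraded isomorphism from the graded results already in hand, by comparing Frobenius images. First I would recall that two finite-dimensional modules over the semisimple algebra $H_n(q)$ are isomorphic if and only if they have the same Frobenius image (equivalently, the same multiplicities $c_\lambda$ in the decomposition into irreducibles $W^\lambda$). So it suffices to show $\Frob(R_{n,k}^{(q)}) = \Frob(F[\OP_{n,k}])$ as symmetric functions. For the left-hand side, I would set $t = 1$ in Corollary~\ref{graded-frobenius-image}, obtaining
\[
\Frob(R_{n,k}^{(q)}) = \sum_{T \in \SYT(n)} \binom{n - \des(T) - 1}{n-k} s_{\mathrm{sh}(T)}(\xx).
\]

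Next I would compute $\Frob(F[\OP_{n,k}])$ directly. The key observation is that the $H_n(q)$-module $F[\OP_{n,k}]$, defined by the explicit action in the introduction, is ``the same'' combinatorial module as the $\symm_n$-permutation module $\QQ[\OP_{n,k}]$ at $q=1$: one checks that in a suitable basis (indexed by $\OP_{n,k}$) the matrix of each generator $T_i$ specializes at $q = 1$ to the matrix of $s_i$ acting on $\QQ[\OP_{n,k}]$. Since the Hilbert-series-type dimension count is trivial here ($\dim F[\OP_{n,k}] = |\OP_{n,k}| = \dim \QQ[\OP_{n,k}]$ for all $q$), the same specialization argument used in the proof of Corollary~\ref{graded-frobenius-image} applies: the multiplicities of $W^\lambda$ in $F[\OP_{n,k}]$ equal the multiplicities of $S^\lambda$ in the $\symm_n$-module $\QQ[\OP_{n,k}]$. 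The Frobenius image of the latter was computed in~\cite{HRS} and equals exactly $\sum_{T \in \SYT(n)} \binom{n-\des(T)-1}{n-k} s_{\mathrm{sh}(T)}(\xx)$, matching the expression above. Comparing the two gives $R_{n,k}^{(q)} \cong_{H_n(q)} F[\OP_{n,k}]$.

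Alternatively — and perhaps more cleanly — I would note that both $R_{n,k}^{(q)}$ and $F[\OP_{n,k}]$ interpolate, at $q=1$, between the $\symm_n$-modules $R_{n,k}^{(1)}$ and $\QQ[\OP_{n,k}]$, which are known to be isomorphic by~\cite{HRS}; the semisimplicity argument of Corollary~\ref{graded-frobenius-image} turns a $q=1$ isomorphism of matrix representations into an isomorphism over $F = \QQ(q)$. Concretely, if one exhibits an $F$-basis of $R_{n,k}^{(q)}$ indexed by $\OP_{n,k}$ in which the $T_i$ act by matrices specializing at $q=1$ to the $\QQ[\OP_{n,k}]$ action, the result is immediate; the generalized GS monomials $\GS_{n,k}$ are the natural candidate for such a basis, since the analogous statement is what~\cite{HRS} and~\cite{HR} do at $q=1$ and $q=0$ respectively.

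The main obstacle I anticipate is the bookkeeping in the direct-comparison route: verifying that the explicit three-case action of $T_i$ on $F[\OP_{n,k}]$ really does specialize at $q = 1$ to the permutation action $s_i$ (this is immediate from the displayed formula) and that $F[\OP_{n,k}]$ is the ``generic'' member of a family whose representing matrices vary polynomially in $q$ — the latter being what licenses the specialization lemma. This is entirely routine but must be stated; if one instead uses the GS-monomial basis one must check that the structure constants of the $H_n(q)$-action on $R_{n,k}^{(q)}$ in that basis are polynomial in $q$ with the correct $q=1$ limit, which is again a direct but slightly lengthy computation. Either way, no new idea is needed beyond the semisimple specialization principle already invoked for Corollary~\ref{graded-frobenius-image}, so the proof should be short.
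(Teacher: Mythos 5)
Your proposal is correct and uses essentially the same mechanism as the paper: the paper's proof simply invokes ``the argument given in the proof of Corollary~\ref{graded-frobenius-image}'' (semisimplicity of $H_n(q)$ plus specialization of representing matrices at $q=1$) to reduce the claim to the $\symm_n$-module isomorphism $R_{n,k}^{(1)} \cong_{\symm_n} \QQ[\OP_{n,k}]$ established in~\cite{HRS}, which is exactly what your first route does in more detail (and your ``alternative'' route is a restatement of the same idea). Your explicit remark that the $q=1$ specialization step must also be applied to $F[\OP_{n,k}]$ --- not just to $R_{n,k}^{(q)}$ --- is a point the paper leaves implicit, but it is immediate from the displayed action of $T_i$ on $\OP_{n,k}$ in the introduction.
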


\begin{proof}
The argument given in the proof of Corollary~\ref{graded-frobenius-image} reduces us to proving the $q = 1$ specialization $R_{n,k}^{(1)}$
is isomorphic as an $\symm_n$-module to the standard permutation action of $\symm_n$ on $\QQ[\OP_{n,k}]$;
this was accomplished in~\cite{HRS}.
\end{proof}

The statements of
Corollaries~\ref{graded-frobenius-image} and \ref{ungraded-frobenius-image} interpolate between results of~\cite{HRS}
at $q = 1$ and~\cite{HR} at $q = 0$.  Since the $0$-Hecke algebra $H_n(0)$ is not semisimple, the proofs of these 
corollaries do not go through to give the corresponding $q = 0$ results of~\cite{HR}.

\section{Proofs}
\label{Proofs}

The Hall-Littlewood polynomials $P_d(x_1, \dots, x_i; q)$ have the following generating function
(see~\cite[p. 209]{Macdonald}), which we take as the definition of $P_d(x_1, \dots, x_i; q)$:
\begin{equation}
\label{macdonald-generating-function}
\sum_{d \geq 0} (1-q) P_d(x_1, \dots, x_i; q) \cdot t^d = \prod_{j = 1}^i \frac{1 - q x_j t}{1 - x_j t}.
\end{equation}
We will need the following well known expansion of $P_d(x_1, \dots, x_i; q)$ into the monomial basis of
symmetric functions.

\begin{lemma}
\label{p-to-m-lemma}
For any nonnegative integer $d$, the Hall-Littlewood polynomial $P_d(\xx_i;u)$ expands in terms of the
monomial symmetric functions $m_{\lambda}(\xx_i)$ as
\begin{equation}
P_d(x_1, \dots, x_i ; q) = \sum_{\lambda \vdash d} (1-q)^{\ell(\lambda) - 1} m_{\lambda}(x_1, \dots, x_i).
\end{equation}
\end{lemma}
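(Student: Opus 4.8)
The plan is to extract the claimed monomial expansion directly from the generating function \eqref{macdonald-generating-function}, which is taken as the definition of $P_d(x_1,\dots,x_i;q)$. First I would manipulate the right-hand product $\prod_{j=1}^i \frac{1-qx_jt}{1-x_jt}$ into a form from which the coefficient of $t^d$ can be read off as a sum over partitions. Writing $\frac{1-qx_jt}{1-x_jt} = 1 + (1-q)\sum_{r\geq 1} x_j^r t^r$, the product becomes $\prod_{j=1}^i \bigl(1 + (1-q)\sum_{r\geq 1} x_j^r t^r\bigr)$. Expanding this product, a term of total $t$-degree $d$ is obtained by choosing a subset $J \subseteq \{1,\dots,i\}$ of variables to contribute a nonzero power and, for each $j\in J$, a positive exponent $r_j$ with $\sum_{j\in J} r_j = d$; such a term contributes $(1-q)^{|J|} \prod_{j\in J} x_j^{r_j} \cdot t^d$.

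The key step is then to organize these terms by the partition $\lambda \vdash d$ obtained by sorting the multiset $\{r_j : j\in J\}$ into weakly decreasing order; note $\ell(\lambda) = |J|$. For a fixed $\lambda$ with $\ell(\lambda) = \ell \leq i$ parts, summing $\prod_{j\in J} x_j^{r_j}$ over all choices of an $\ell$-element set $J\subseteq\{1,\dots,i\}$ and all assignments of the parts of $\lambda$ to the elements of $J$ produces exactly the monomial symmetric polynomial $m_\lambda(x_1,\dots,x_i)$ (each distinct monomial of content $\lambda$ appearing once, since $m_\lambda$ is the orbit sum). Hence the coefficient of $t^d$ in the product is $\sum_{\lambda\vdash d} (1-q)^{\ell(\lambda)} m_\lambda(x_1,\dots,x_i)$. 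Dividing by $(1-q)$ as dictated by the left-hand side of \eqref{macdonald-generating-function} yields $P_d(x_1,\dots,x_i;q) = \sum_{\lambda\vdash d} (1-q)^{\ell(\lambda)-1} m_\lambda(x_1,\dots,x_i)$, as claimed; partitions with more than $i$ parts contribute nothing since $m_\lambda(x_1,\dots,x_i)=0$ there, consistent with the empty set of choices for $J$.

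The only mild subtlety — and the closest thing to an obstacle — is bookkeeping the bijection between expanded terms of the product and pairs (partition $\lambda$, monomial of content $\lambda$ in $i$ variables): one must check that each monomial in $m_\lambda(x_1,\dots,x_i)$ arises from exactly one choice of $(J, (r_j)_{j\in J})$, which is immediate because the monomial determines its support $J$ and the exponents $r_j$. Everything else is a formal power series identity in $t$. I would present this as a short direct computation rather than invoking \cite{Macdonald} for the identity, since the generating function is already in hand.
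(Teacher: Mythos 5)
Your proposal is correct and follows essentially the same route as the paper: both rewrite each factor as $1 + (1-q)\sum_{r\geq 1} x_j^r t^r$ (the paper writes this as $\frac{1-q}{1-x_jt} + q$ and then expands), multiply out the product, and identify the coefficient of $t^d$ as $\sum_{\lambda\vdash d}(1-q)^{\ell(\lambda)}m_\lambda(\xx_i)$ before dividing by $1-q$. The only difference is that you spell out the bookkeeping of how the expanded terms regroup into $m_\lambda$, which the paper leaves implicit.
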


\begin{proof}
Starting with Equation~\eqref{macdonald-generating-function} we have
\begin{equation}
\begin{split}
\sum_{d \geq 0} (1-q) P_d(x_1, \dots, x_i; q) \cdot t^d &= \prod_{j = 1}^i \frac{1 - q x_j t}{1 - x_j t} \\
&= \prod_{j = 1}^i \left[ \frac{1-q}{1- x_j t} +  q \right] \\
&= \prod_{j = 1}^i [1 + (1-q) x_j t + (1-q) x_j^2 t^2 + \cdots ] \\
&= \sum_{d \geq 0} \sum_{\lambda \vdash n} (1-q)^{\ell(\lambda)} m_{\lambda}(x_1, \dots, x_i) t^d.
\end{split}
\end{equation}
Dividing both sides by $(1-q)$ and taking the coefficient of $t^k$ gives the result.
\end{proof}

We will also need the following version of the Leibniz rule for the action of the generator $T_i$ of the Hecke algebra $H_n(q)$ on products of polynomials in $F[\xx_n]$.

\begin{lemma}
\label{leibniz-lemma}
Let $1 \leq i \leq n-1$ and $f, g \in F[\xx_n]$.  We have
\begin{equation}
T_i.(fg) = (s_i.f)(T_i.g) + (1-q) (\pib_i.f)g.
\end{equation}
\end{lemma}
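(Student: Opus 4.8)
The plan is to expand everything in terms of the known action of $s_i$ and $\pib_i$ and then check that the two sides agree. Recall that $T_i . h = q\, s_i . h + (1-q)\, \pib_i . h$ for any $h \in F[\xx_n]$. Applying this to $h = fg$ gives
\begin{equation*}
T_i.(fg) = q\, s_i.(fg) + (1-q)\, \pib_i.(fg) = q\, (s_i.f)(s_i.g) + (1-q)\, \pib_i.(fg),
\end{equation*}
using that $s_i$ is an algebra automorphism. So the lemma reduces to a Leibniz-type identity for $\pib_i = \pi_i - 1$ on a product, namely we must show
\begin{equation*}
q\, (s_i.f)(s_i.g) + (1-q)\, \pib_i.(fg) = (s_i.f)(T_i.g) + (1-q)(\pib_i.f)g.
\end{equation*}

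First I would establish the Leibniz rule for the isobaric divided difference $\pi_i$ itself. Starting from the standard product rule for the divided difference operator, $\partial_i.(FG) = (\partial_i.F)G + (s_i.F)(\partial_i.G)$, and taking $F = x_i f$, $G = g$ together with $\pi_i.h = \partial_i.(x_i h)$ and $s_i.(x_i f) = x_{i+1}(s_i.f)$, one gets
\begin{equation*}
\pi_i.(fg) = \partial_i.(x_i f g) = (\pi_i.f)g + x_{i+1}(s_i.f)(\partial_i.g).
\end{equation*}
It is then cleaner to rewrite the last term using $x_{i+1}\partial_i.g = \pi_i.g - x_i\partial_i.g$... actually the slicker route is to use the known identity $\pi_i.(fg) = (\pi_i.f)(\pi_i.g) - \partial_i.\bigl((\partial_i.f)(\,x_i x_{i+1}\,)(\partial_i.g)\bigr)$-type formulas; but I expect the most economical path is simply: from $\partial_i.(FG)=(\partial_i.F)G+(s_i.F)(\partial_i.G)$ with $F=x_if$ we obtain $\pi_i.(fg)=(\pi_i.f)\,g+(s_i.(x_if))\,(\partial_i.g)$, and since $s_i.(x_if)=x_{i+1}(s_i.f)$ while $x_{i+1}\partial_i.g = -\pib_i.g$ whenever... no. The correct clean statement to aim for is
\begin{equation*}
\pib_i.(fg) = (s_i.f)(\pib_i.g) + (\pib_i.f)g + (\pib_i.f)(\pib_i.g) \quad\text{or a variant},
\end{equation*}
and I would pin down the exact form by a direct monomial computation using \eqref{pib}: write $f = m_1 x_i^a x_{i+1}^b$ and $g = m_2 x_i^c x_{i+1}^d$ with $m_1, m_2$ free of $x_i, x_{i+1}$, reduce to the one-variable-pair case, and verify the identity on these building blocks, extending by bilinearity.

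Once the $\pib_i$ Leibniz rule is in hand, substituting it into the reduced equation above and replacing $T_i.g$ by $q\,s_i.g + (1-q)\pib_i.g$ on the right-hand side turns the desired identity into a polynomial identity with no operators left to expand; collecting the coefficients of $(s_i.f)(s_i.g)$, $(s_i.f)(\pib_i.g)$, $(\pib_i.f)g$, and any cross terms should make both sides match term by term. The main obstacle is purely bookkeeping: getting the exact form of the $\pib_i$-Leibniz rule right, since $\pib_i$ is affine rather than linear over the ring and the constant $-1$ shift interacts with the product; I would guard against sign errors by sanity-checking the final formula on a small example such as $f = g = x_i$ (where $T_i.x_i = q x_{i+1}$, $\pib_i.x_i = x_{i+1}$, and $T_i.(x_i^2) = q(x_i^2 + x_i x_{i+1} + x_{i+1}^2) - (x_i^2+x_ix_{i+1}) + \cdots$) before declaring the proof complete.
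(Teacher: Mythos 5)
Your overall strategy matches the paper's: expand $T_i.(fg) = q\,s_i.(fg) + (1-q)\,\pib_i.(fg)$ and reduce everything to a Leibniz rule for $\pib_i$. The gap is that you never actually pin down that rule, and both of the candidate formulas you float are wrong. The intermediate identity $x_{i+1}\partial_i.g = \pi_i.g - x_i\partial_i.g$ is false (it would force $x_i\partial_i.g = g$; try $g = x_i^2$). The candidate $\pib_i.(fg) = (s_i.f)(\pib_i.g) + (\pib_i.f)g + (\pib_i.f)(\pib_i.g)$ has a spurious cross term: with $f = g = x_i$, Equation~\eqref{pib} gives $\pib_i.(x_i^2) = x_ix_{i+1} + x_{i+1}^2$, while your right-hand side evaluates to $x_ix_{i+1} + 2x_{i+1}^2$.

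The correct rule is $\pib_i.(fg) = (\pib_i.f)g + (s_i.f)(\pib_i.g)$, with no cross term, and you were one step away from it. From $\partial_i.(FG)=(\partial_i.F)G+(s_i.F)(\partial_i.G)$ with $F = x_i f$, $G = g$, you already reached $\pi_i.(fg) = (\pi_i.f)g + x_{i+1}(s_i.f)(\partial_i.g)$. Now take $F = x_i$, $G = g$ in the same $\partial_i$-Leibniz rule to get $\pi_i.g = \partial_i.(x_ig) = g + x_{i+1}\partial_i.g$, i.e.\ $x_{i+1}\partial_i.g = \pib_i.g$ as operators. Substituting gives $\pi_i.(fg) = (\pi_i.f)g + (s_i.f)(\pib_i.g)$, and subtracting $fg$ from both sides yields $\pib_i.(fg) = (\pib_i.f)g + (s_i.f)(\pib_i.g)$. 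Plugging this into $T_i.(fg) = q(s_i.f)(s_i.g) + (1-q)\pib_i.(fg)$ and regrouping the $(s_i.f)$ terms as $(s_i.f)\bigl(q\,s_i.g + (1-q)\pib_i.g\bigr) = (s_i.f)(T_i.g)$ completes the argument exactly as in the paper. A monomial-by-monomial check via \eqref{pib} would also work, but it is unnecessary once the operator identity $x_{i+1}\partial_i = \pib_i$ is in hand.
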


\begin{proof}
Using the `Leibniz' Rule $\pib_i.(fg) = (\pib_i.f)g + (s_i.f)(\pib_i.g)$ we have
\begin{align*}
T_i.(fg) &= q(s_i.f)(s_i.g) + (1-q) (\pib_i.f)g + (1-q) (s_i.f)(\pib_i.g) \\
&= (s_i.f)(T_i.g) + (1-q) (\pib_i.f)g.
\end{align*}
\end{proof}

We are ready to prove Proposition~\ref{r-is-stable}.

\begin{proof}[Proof of Proposition~\ref{r-is-stable}]
Let $1 \leq i \leq n-1$.
By Lemma~\ref{leibniz-lemma}, it suffices to show that for any generator $g$ of $I_{n,k}^{(q)}$ we have 
$T_i.g \in I_{n,k}^{(q)}$.  For then if $f \in F[\xx_n]$ is arbitrary we have
$T_i.(fg) = (1-q) x_{i+1} (\partial_i.f) g + (s_i.f)(T_i.g)$, and both terms on the right hand side lie in 
$I_{n,k}^{(q)}$. 

If $g \in F[\xx_n]$ is symmetric in $x_i, x_{i+1}$ then $\overline{\pi}_i.g = 0$ so that 
$T_i.g = q(s_i.g) = qg$.  The generators $e_n(\xx_n), e_{n-1}(\xx_n), \dots, e_{n-k+1}(\xx_n)$, as well
as the generators $P_k(x_1, \dots, x_j; q)$ for $j \neq i$, are symmetric in $x_i, x_{i+1}$ so that if $g$ is any 
of these generators we have $T_i.g = qg \in I_{n,k}^{(q)}$.  

We are reduced to proving $T_i.P_k(x_1, \dots, x_i; q) \in I_{n,k}^{(u)}$.  
This would be a consequence of 
\begin{equation}
\label{main-closure-equation}
T_i.P_k(x_1, \dots, x_i; q) = P_k(x_1, \dots, x_{i+1}; q) - P_k(x_1, \dots, x_i; q) + q P_k(x_1, \dots, x_{i-1}; q),
\end{equation}
where we interpret $P_k(x_1, \dots, x_{i-1}; q)$ to be $0$ when $i = 1$,
since the right hand side of Equation~\eqref{main-closure-equation} clearly lies in $I_{n,k}^{(q)}$.

To prove Equation~\eqref{main-closure-equation} we compare the coefficients of monomials $m$ on both sides, making
use of Lemma~\ref{p-to-m-lemma}.  
Our analysis 
breaks up into four cases depending on whether $x_i$ or $x_{i+1}$ appears in $m$ with a positive exponent.  Let $p$ 
be the total number of variables appearing in $m$ with positive exponent.

{\bf Case 1:}  {\em Neither $x_i$ nor $x_{i+1}$ appears in $m$.} 
The term corresponding to $m$ on the left hand side of Equation~\eqref{main-closure-equation}
is $(1-q)^{p-1} T_i.m = q (1-q)^{p-1} m$.  The term corresponding to $m$ on the right hand side of 
Equation~\eqref{main-closure-equation} is 
\begin{equation}
(1-q)^{p-1} m - (1-q)^{p-1} m + q(1-q)^{p-1}m = q (1-q)^{p-1} m.
\end{equation}

{\bf Case 2:}  {\em $x_i$ appears in $m$, but $x_{i+1}$ does not.}  Write $m = m' x_i^a$ where $m'$ is a monomial
in $x_1, \dots, x_{i-1}$.  Since the coefficient of $m$ in $\overline{\pi}_i(m)$ is $0$ by
Equation~\eqref{pib}, the monomial $m$ does not 
appear on the left hand side of Equation~\eqref{main-closure-equation}.  The term corresponding to $m$ on the right hand side is
\begin{equation}
(1-q)^{p-1} m - (1-q)^{p-1} m + 0 = 0.
\end{equation}

{\bf Case 3:}  {\em $x_{i+1}$ appears in $m$, but $x_i$ does not.}  Write $m = m' x_{i+1}^b$ where $m'$ is a monomial
in $x_1, \dots, x_{i-1}$.
The coefficient of $m$ on the left hand side of Equation~\eqref{main-closure-equation} is the coefficient of $m$
in $T_i.((1-q)^{p-1} m' x_i^b)$, which is $(1-q)^{p-1}[q + (1-q)] = (1-q)^{p-1}$.  
The term corresponding to $m$ on the right hand side is
$(1-q)^{p-1} m - 0 + 0 = (1-q)^{p-1} m$.

{\bf Case 4:}  {\em Both $x_i$ and $x_{i+1}$ appear in $m$.}  Write $m = m' x_i^a x_{i+1}^b$ where $m'$ is a monomial
in $x_1, \dots, x_{i-1}$.  The coefficient of $m$ on the left hand side of Equation~\eqref{main-closure-equation} is the coefficient
of $m$ in $T_i.((1-q)^{p-2} m' x_i^{a+b})$, which is $(1-q)(1-q)^{p-2} = (1-q)^{p-1}$.  This is also the coefficient of $m$
on the right hand side of Equation~\eqref{main-closure-equation}.
\end{proof}

By either Corollary~\ref{graded-frobenius-image} or Corollary~\ref{ungraded-frobenius-image}, we
know $\dim(R_{n,k}^{(q)}) =  |\OP_{n,k}|$.
This puts us in a good position to prove Theorem~\ref{point-set-theorem}.

\begin{proof}[Proof of Theorem~\ref{point-set-theorem}]
We show that the generators of $I_{n,k}^{(q)}$ arise as highest degree components of certain polynomials in $\II(Y_{n,k}^{(q)})$, where the point set $Y_{n,k}^{(q)}$ is defined in terms of distinct rational numbers $\alpha_1,\ldots,\alpha_k$ satisfying certain conditions (see Definition~\ref{point-set-definition}).
To start, let $n-k+1 \leq d \leq n$.  We claim  
\begin{equation}
\label{first-inclusion}
\sum_{i = 0}^d (-1)^{d-i} e_i(\xx_n) h_{d-i}(\alpha_1, \dots, \alpha_k) \in \II(Y_{n,k}^{(q)}),
\end{equation}
so that taking the top component gives $e_d(\xx_n) \in \TT(Y_{n,k}^{(q)})$.  To see~\eqref{first-inclusion}, notice that the alternating sum
$\sum_{i = 0}^d (-1)^{d-i} e_i(\xx_n) h_{d-i}(\alpha_1, \dots, \alpha_k)$ is the coefficient 
of $t^d$ in the rational function
\begin{equation}
\frac{(1 + x_1 t)(1 + x_2 t) \cdots (1 + x_n t)}{(1 + \alpha_1 t) (1 + \alpha_2 t) \cdots (1 + \alpha_k t)}.
\end{equation}
Since the numbers $\alpha_1, \dots, \alpha_k$ must appear as coordinates of points in $Y_{n,k}^{(q)}$, when 
$(x_1, \dots, x_n) \in Y_{n,k}^{(q)}$ the factors in the denominator cancel with $k$ factors in the numerator, 
yielding a polynomial in $t$ of degree $n-k < d$.
Thus the coefficient of $t^d$ evaluated at $(x_1, \dots, x_n) \in Y_{n,k}^{(q)}$ must be zero.

Next, let $1 \leq i \leq n$.  We need to show $P_k(x_1, \dots, x_i; q) \in \TT(X_{n,k}^{(q)})$.  
To do this, we will use the generating function
for the $P_k(x_1, \dots, x_i; q)$ provided by Equation~\eqref{macdonald-generating-function}.

We claim that
\begin{equation}
\label{i-in-t-main-identity}
\sum_{j = 0}^k (-1)^{k-j} (1-q) P_j(x_1, \dots, x_i; q) e_j(\alpha_1, \dots, \alpha_k) \equiv 
(-1)^k \cdot q^i \cdot \alpha_1 \cdots \alpha_k
 \text{ on $Y_{n,k}^{(q)}$.}
\end{equation}
To see this, notice that by Equation~\eqref{macdonald-generating-function}
the left hand side of Equation~\eqref{i-in-t-main-identity} is the coefficient of $t^k$ in
the expression
\begin{equation}
\label{cancelling-expression}
\left( \prod_{j = 1}^i \frac{1 - q x_j t}{1 - x_j t} \right) \cdot (1 - \alpha_1 t) (1 - \alpha_2 t) \cdots (1 - \alpha_k t).
\end{equation}
Consider a typical ordered set partition $\sigma = (B_1 \mid \cdots \mid B_k) \in \OP_{n,k}$ and the corresponding 
point $\varphi(\sigma) = (y_1, \dots, y_n) \in Y_{n,k}^{(q)}$.   Let $i_j$ be the number of entries in the block $B_j$
of $\sigma$ which are $\leq i$; we have $i_1 + \cdots + i_k = i$.
Upon specialization to $\varphi(\sigma)$, the expression~\eqref{cancelling-expression} equals 
\begin{equation}
\left( \prod_{j = 1}^k  \frac{1 - q^{i_j} \alpha_j t}{1 - \alpha_j t} \right) \cdot (1 - \alpha_1 t) \cdots (1 - \alpha_k t)
=  (1 - q^{i_1} \alpha_1 t) \cdots (1 - q^{i_k} \alpha_k t).
\end{equation}
For example, the expression~\eqref{cancelling-expression} with $i=3$ evaluated at $\varphi(5  \mid 1 4 6 \mid 2 3) = (\alpha_2, \alpha_3, q \cdot \alpha_3, q \cdot \alpha_2, \alpha_1, q^2 \cdot \alpha_2)$  equals
\[ \frac{(1-q\alpha_2 t)(\cancel{1-q\alpha_3 t})(1-q^2\alpha_3 t)}{(\cancel{1-\alpha_2 t})(\cancel{1-\alpha_3 t})(\cancel{1-q\alpha_3t})} (1-\alpha_1t)(\cancel{1-\alpha_2t})(\cancel{1-\alpha_3t}).
\]
Taking the coefficient of $t^k$ in this polynomial gives 
\[
(-1)^k \cdot q^{i_1 + \cdots + i_k} \cdot \alpha_1 \cdots \alpha_k = (-1)^k \cdot q^i \cdot \alpha_1 \cdots \alpha_k,
\]
proving the assertion (\ref{i-in-t-main-identity}).

As an immediate consequence of (\ref{i-in-t-main-identity}) we have
\begin{equation}
(-1)^{k+1} \cdot q^i \cdot \alpha_1 \cdots \alpha_k + 
\sum_{j = 0}^k (-1)^{k-j} (1-q) P_j(x_1, \dots, x_i; q) e_j(\alpha_1, \dots, \alpha_k) \in \II(Y_{n,k}^{(q)}).
\end{equation}
Taking the highest degree component gives
\begin{equation}
(1-q) P_k(x_1, \dots, x_i; q) \in \TT(Y_{n,k}^{(q)}).
\end{equation}
Since we are working over $F = \QQ(q)$, we have $1 - q \neq 0$ so that
 $P_k(x_1, \dots, x_i; q) \in \TT(Y_{n,k}^{(q)})$.  
 
So far we have demonstrated the inclusion $I_{n,k}^{(q)} \subseteq \TT(Y_{n,k}^{(q)})$.  
On the other hand, we have 
\[
\dim(F[\xx_n]/\TT(Y_{n,k}^{(q)}) = |Y_{n,k}^{(q)}| = |\OP_{n,k}| = \dim(F[\xx_n]/I_{n,k}^{(q)}),
\]
where the last equality follows from Corollary~\ref{ungraded-frobenius-image}. 
This forces $I_{n,k}^{(q)} = \TT(Y_{n,k}^{(q)})$.
\end{proof}

\section{Acknowledgements}
\label{Acknowledgements}

B. Rhoades was partially supported by NSF Grant DMS-1500838.
T. Scrimshaw was partially supported by NSF Grant
DMS-1148634.
This work benefited from computations using {\sc SageMath}~\cite{sage}.


\begin{thebibliography}{99}
 
 \bibitem{ABR} R. Adin, F. Brenti, and Y. Roichman.  Descent representations and multivariate statisitcs.
 {\it Trans. Amer. Math. Soc.}, {\bf 357} (2005), 3051--3082.

\bibitem{HeckeCoinv} 
R. M. Adin, A. Postnikov\ and\ Y. Roichman, Hecke algebra actions on the coinvariant algebra, {\it J. Alg.} {\bf 233} (2000), no.~2, 594--613. 

\bibitem{GelfandModels}
R. M. Adin, A. Postnikov\ and\ Y. Roichman. Combinatorial Gelfand Models, {\it J. Alg.} {\bf 320} (2008), no.~3, 
1311--1325.
 
 
 


\bibitem{BumpHecke} 
Daniel Bump, Hecke Algebras, lecture notes retrieved from \url{http://sporadic.stanford.edu/bump/math263/hecke.pdf}.
 
 \bibitem{CR}  K.-T. J. Chan and B. Rhoades.  Generalized coinvariant algebras for wreath products.
Submitted, 2017. \arxiv{1701.06256}.
 
 
 \bibitem{GarsiaProcesi}  A. M. Garsia and C. Procesi.  On certain graded $S_n$-modules and the $q$-Kostka
 polynomials.  {\it Adv. Math.}, {\bf 94 (1)} (1992), 82--138.
 
 
 \bibitem{GeckPfeiffer}  M. Geck and G. Pfeiffer.  {\it Characters of Finite Coxeter Groups and 
 Iwahori-Hecke Algebras}.  
 London Mathematical Society Monographs.
 Clarendon Press: Oxford, 2000.
 
 
 

\bibitem{HRW}  J. Haglund, J. Remmel, and A. T. Wilson.  The Delta Conjecture.  
Accepted, {\it Trans. Amer. Math. Soc.}, 2016.  \arxiv{1509.07058}.

\bibitem{HRS} J. Haglund, B. Rhoades, and M. Shimozono.  Ordered set partitions, generalized 
coinvariant algebras, and the Delta Conjecture.  
Accepted, {\it Adv. Math.}, 2018.
\arxiv{1609.07575}.





\bibitem{HR}  J. Huang and B. Rhoades.  Ordered set partitions and the 0-Hecke algebra.
{\it ALCO}, {\bf 1 (1)} (2018), 47--80.



\bibitem{Macdonald}  I. G. Macdonald.  {\it Symmetric Functions and Hall Polynomials,} Second edition.
Oxford Mathematican Monographs.
New York: The Clarendon Press Oxford University Press, 1995.
With contributions by A. Zelevinsky, Oxford Science Publications.

\bibitem{Meyer}  K. Meyer.  Descent representations of generalized coinvariant algebras.
Submitted, 2018.  \arxiv{1711.11355}.





\bibitem{sage} SageMath, the Sage Mathematics Software System (Version 8.0),
   The Sage Developers, 2017, \url{http://www.sagemath.org}.









  
\end{thebibliography}
\end{document}